\theoremstyle{plain} 
\newtheorem{theorem}{Theorem}
\newtheorem{lemma}[theorem]{Lemma}
\newtheorem{corollary}[theorem]{Corollary}
\newtheorem{proposition}[theorem]{Proposition}
\newtheorem*{claim}{Claim}
\theoremstyle{definition} 
\newtheorem{definition}[theorem]{Definition}
\newtheorem{example}[theorem]{Example}
\newtheorem{remark}[theorem]{Remark}
\newtheorem*{acknowledgements}{Acknowledgements}
\title{Multi-way expansion constants and partitions of a graph}
\author{Mamoru Tanaka}
\date{} 
\begin{document}

\maketitle

\footnote{Key words and key phrases. \textit{Multi-way expansion constant, Eigenvalues of Laplacian, Partition, sequence of expanders, Coarse embeddability}.}
\footnote{2010 Mathematics Subject Classification. Primary 05C50; Secondary 51F99.}

\begin{abstract}
In this paper, we consider a relation between $k$-way expansion constant of a finite graph  
and the expansion constants of subgraphs in a $k$-partition of the graph. 
Using this relation, 
we show that a sequence of finite graphs which have uniformly bounded $k+1$-way expansion constants and uniformly bounded degrees can be divided into $k$ or fewer sequences of expanders. 
Furthermore, we prove that any such sequence of finite graphs is not coarsely embeddable into any Hilbert space. 
\end{abstract}


\section{Introduction} 
We assume graphs are non-oriented, don't have loops and multiple edges.  

Let $G=(V,E)$ be a finite graph in what follows.  
For a graph $H$, $V_H$ denotes the vertex set of $H$ and $E_H$ denotes the edge set of $H$ if they are unspecified. 
The {\it expansion constant} of $G$ is  
$$h(G):= \min_{F\subset V} \left\{ \frac{|\partial F|}{|F|}: 1\le|F|\le \frac{|V|}{2} \right\} $$
where $\partial F$ is the set of the edges connecting $F$ and $V-F$. 
The expansion constant of a graph represents a strength of connection of the graph.
Lee-Gharan-Trevisan \cite{MR2961569} generalized expansion constant as 
\begin{eqnarray*}
h_k(G):= \min\left\{ \max_{i=1,2,\dots, k} \frac{|\partial V^i|}{|V^i|} : V=\bigsqcup_{i=1}^kV^i\text{(disjoint union)}, V^i\not= \emptyset \right\} 
\end{eqnarray*}
for $1\le k \le |V|$.
We call it the {\it $k$-way expansion constant} of $G$.   
Note that $h_1(G)=0$ and $h_2(G)=h(G)$.

The $k$-way expansion constant has the property that for each $k\ge 2$, $h_k(G)=0$ and $h_{k+1}(G)>0$ if and only if the number of connected components of $G$ is $k$. 
Moreover, if the number of connected components of $G$ is $k$, then $h_{k+1}(G) = \min_{i=1,2,\dots ,k} h(G^i)$ where $G^i$'s are connected components of $G$. 
We consider such a property for $k$-partitions of $G$. 
An {\it induced subgraph} of $G$ is a graph $H$ satisfying $V_H \subset V$ and 
$E_H=\{xy\in E: x,y\in V_H\}$. 
Hence an induced subgraph is determined by its vertex set. 
A {\it $k$-partition} of $G$ is a family of induced subgraphs $\{ G^i=(V^i,E^i)\}_{i=1}^k$ such that $V$ is the disjoint union of $V^i$'s. 
We can easily show that $h_{k+1}(G) \ge \min_{i=1,2,\dots ,k}h(G^i)$ for any $k$-partition $\{G^i\}_{i=1}^k$ of $G$ (Lemma \ref{lem:1}). 
In section \ref{sec:EP}, we prove that if $h_{k+1}(G)/3^{k+1}>h_k(G)$ for some $k$, then there is a $k$-partition $\{G^i=(V^i,E^i)\}_{i=1}^k$ of $G$ such that 
\begin{eqnarray*}
\frac{h_{k+1}(G)}{3^{k+1}}\le \min_{i=1,2,\dots ,k}h(G^i), \ \ \ 
\max_{i=1,2,\dots, k} \frac{|\partial V^i|}{|V^i|} \le 3^kh_k(G)
\end{eqnarray*}
(Theorem \ref{thm:l-sub}). 

A sequence of expanders has an important role in computer science, group theory, geometry and topology (cf.\ \cite{MR2247919}). 
A {\it sequence of expanders} is a sequence of finite graphs $\{G_n=(V_n,E_n)\}_{n=1}^\infty$ such that 
(i) $\lim_{n \to \infty}|V_n|=\infty;$ 
(ii) $\sup_{n\in \mathbb{N}}\deg(G_n) < \infty ;$ 
(iii) $\inf_{n\in \mathbb{N}}h(G_n)>0$. 
Here the degree $\deg(x)$ of $x\in V$ is the number of edges such that $x$ is an end point of it, and $\deg(G)$ is the maximum number of the degrees.  
It is natural to consider more general sequence of finite graphs ({\it a sequence of multi-way expanders}) $\{G_n=(V_n,E_n)\}_{n=1}^\infty$ with the following conditions: 
(i) $\lim_{n \to \infty}|V_n|=\infty ;$ 
(ii) $\sup_{n\in \mathbb{N}}\deg(G_n) < \infty ;$ 
(iii)' $\inf_{n\in \mathbb{N}}h_{k+1}(G_n)>0$ for some $k\ge 1$. 
In section \ref{sec:Exp}, we prove that such a sequence of finite graphs can be divided into $k$ or fewer sequences of expanders (Corollary \ref{cor:1}).

We can endow a graph $G$ with the {\it path metric} $d_G(x,y)$ between vertices $x$ and $y$, which is the minimum number of edges in any path connecting $x$ and $y$. 
It is well-known that a sequence of expanders is not coarsely embeddable into any Hilbert space (\cite{MR1978492}). 
Here the coarse embeddability (also referred to as uniform embeddability) is an embeddability between metric spaces, 
which was introduced by Gromov in \cite{MR1253544}, and will be defined in section  \ref{sec:CE}. 
It is shown by Yu in \cite{MR1728880} that if an infinite graph with bounded degree is coarsely embeddable into a Hilbert space, then the graph satisfies the coarse Baum-Connes conjecture. 
An infinite graph which is not coarsely embeddable into any Hilbert space is an obstruction to verify the coarse Baum-Connes conjecture using Yu's result. 
It is easy to see that if a sequence of expanders is coarsely embeddable into an infinite graph, then the infinite graph is not coarsely embeddable into any Hilbert space. 
On the other hand, in section \ref{sec:CE}, we prove that a sequence of graphs with uniformly bounded degrees which has a sequence of expanders as induced subgraphs is not coarsely embeddable into any Hilbert space. 
Using this, we show that a sequence of finite graphs satisfying (i), (ii) and (iii)' is also not coarsely embeddable into any Hilbert space. 

The eigenvalues of the Laplacian on a graph is also an important constant associated to the graph. 
The Laplacian $\Delta_G$ is an operator on $\mathbb{R}^V$ defined by 
$$\Delta_G f(x) := f(x)\deg(x)-\sum_{xy\in E}f(y)$$ 
for $f\in \mathbb{R}^V$ and $x\in V$, where $\mathbb{R}^V$ is the set of the real-valued functions on $V$.
Let $\lambda _1(G)\le \lambda _2(G)\le \dots \le \lambda_{|V|}(G)$ denote the eigenvalues of $\Delta_G$ according to largeness. 
It is known that $\lambda _k(G)=0$ and $\lambda _{k+1}(G)>0$ if and only if the number of connected components of $G$ is $k$. 
This suggests that eigenvalues of the Laplacian also represents some strength of connectivity of the graph, 
so $\lambda _2(G)$ is called the {\it algebraic connectivity} of $G$. 
We can also easily show that $\min_{i=1,2,\dots ,k}\lambda_2(G^i)\le \lambda _{k+1}(G)$ for any $k$-partition $\{G^i\}_{i=1}^k$ of $G$ (Lemma \ref{lem:2}). 
It is shown in \cite{MR782626}, \cite{MR743744}, (cf. \cite{MR1989434})
that the expansion constant is estimated above and below using $\lambda_2(G)$:
\begin{equation}\label{eq:h=lambda}
\frac{\lambda_2(G)}{2}\le h(G)\le \sqrt{2\deg(G)\lambda_2(G)}.
\end{equation}
As the generalization of this inequality, Lee-Gharan-Trevisan \cite{MR2961569} prove that there is a constant $C>0$ such that  
\begin{equation}
\frac{\lambda_k(G)}{2\deg(G)}\le h_k(G)\le Ck^2\deg(G) \sqrt{\lambda_k(G)} \label{eq:hk=lambdak}
\end{equation}
for every connected graph $G$ and every $k=1,2,\dots, |V|$, 
where their original statement is described using the eigenvalues of the normalized Laplacian on a graph and weighted 
 $k$-way expansion constants, and it doesn't need the connectivity of the graph and we can omit the dependence on $\deg(G)$. 
Using their result, we obtain that a sequence of finite graphs $\{G_n=(V_n,E_n)\}_{n=1}^\infty$ with the following conditions can be divided into $k$ or fewer sequences of expanders: 
(i) $\lim_{n \to \infty}|V_n|=\infty ;$ 
(ii) $\sup_{n\in \mathbb{N}}\deg(G_n) < \infty ;$ 
(iii)'' $\inf_{n\in \mathbb{N}}\lambda_{k+1}(G_n)>0$ for some $k\ge 1$. 
A similar result for Riemannian manifolds was given by Funano and Shioya \cite{MR3061776}: 
A sequence of closed weighted Riemannian manifolds whose $k+1$-th eigenvalues diverges to $\infty$ is a union of $k$ L${\rm \acute{e}}$vy families.

\section{Partitions and $k$-way expansion constants}\label{sec:EP}

For an induced subgraph $H$ of $G$, $G-H$ denotes the induced subgraph of $G$ whose vertex set is $V-V_H$, and we call it the {\it complement subgraph} of $H$ in $G$. 
For an induced subgraph $H_1$ of $G$ and an induced subgraph $H_2$ of $H_1$, let $\partial_{H_1} V_{H_2} := \{ xy\in E_{H_1}| x\in V_{H_2}, y\in V_{H_1-H_2}\}$.

\begin{lemma}\label{lem:1}
For any $k$-partition $\{G^i=(V^i,E^i)\}_{i=1}^k$ of $G$
\begin{equation}
h_{k+1}(G) \ge \min_{i=1,2,\dots ,k}h(G^i) .\label{eq:hh}
\end{equation}
\end{lemma}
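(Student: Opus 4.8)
The plan is to establish the inequality for an \emph{arbitrary} $(k+1)$-partition $V=\bigsqcup_{j=1}^{k+1}W^j$ into nonempty sets $W^j$, and then take the minimum over all such partitions; since $h_{k+1}(G)$ is by definition the minimum of $\max_j |\partial W^j|/|W^j|$, this suffices. Writing $h_{\min}:=\min_{i=1,\dots,k}h(G^i)$, the goal reduces to exhibiting a single index $j_0$ with $|\partial W^{j_0}|\ge h_{\min}\,|W^{j_0}|$, because this alone forces $\max_j |\partial W^j|/|W^j|\ge h_{\min}$.

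The first ingredient I would record is that the two partitions interact cleanly at the level of edges. For each $i$ and $j$, every edge of $\partial_{G^i}(W^j\cap V^i)$ (an edge of $G^i$ joining $W^j\cap V^i$ to $V^i\setminus W^j$) has exactly one endpoint in $W^j$, so it belongs to $\partial W^j$; and because such an edge lies entirely inside $V^i$, the edge sets $\partial_{G^i}(W^j\cap V^i)$ for distinct $i$ are pairwise disjoint. Hence $|\partial W^j|\ge\sum_{i=1}^k |\partial_{G^i}(W^j\cap V^i)|$ for every $j$.

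The key step is a pigeonhole argument that selects $j_0$. For each fixed $i$, at most one index $j$ can satisfy $|W^j\cap V^i|>|V^i|/2$, since the sets $W^j\cap V^i$ are disjoint subsets of $V^i$. Assigning to each $i$ this unique ``dominant'' index (when it exists) defines a partial map from $\{1,\dots,k\}$ into $\{1,\dots,k+1\}$, whose image has at most $k$ elements; therefore some $j_0\in\{1,\dots,k+1\}$ is dominant for no $i$, i.e.\ $|W^{j_0}\cap V^i|\le |V^i|/2$ for all $i$. For this $j_0$, whenever $W^{j_0}\cap V^i$ is nonempty it is a nonempty proper subset of $V^i$ of size at most $|V^i|/2$, so the definition of $h(G^i)$ gives $|\partial_{G^i}(W^{j_0}\cap V^i)|\ge h(G^i)\,|W^{j_0}\cap V^i|\ge h_{\min}\,|W^{j_0}\cap V^i|$ (both sides vanishing when the intersection is empty). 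Summing over $i$ and invoking the edge inequality of the previous paragraph yields $|\partial W^{j_0}|\ge h_{\min}\sum_{i=1}^k |W^{j_0}\cap V^i|=h_{\min}\,|W^{j_0}|$, exactly as required.

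I expect the only genuine subtlety to be this pigeonhole step. The naive estimate, applying the expansion constant of $G^i$ piece by piece, breaks down precisely when some $W^j$ captures \emph{more} than half of a $V^i$: the isoperimetric bound then controls only the smaller complementary side, and the factor $|W^j|$ in the denominator is lost. Restricting to a $W^{j_0}$ that simultaneously lies in the ``small half'' of every $V^i$---available exactly because there is one more $W$-part than $V$-part---removes this obstruction and lets the per-piece estimates sum to the full size $|W^{j_0}|$.
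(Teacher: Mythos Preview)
Your proof is correct and follows essentially the same approach as the paper: both use the pigeonhole argument that at most $k$ of the $k+1$ parts can occupy more than half of some $V^i$, so there exists a part $W^{j_0}$ lying in the small half of every $V^i$, after which the expansion of each $G^i$ controls the boundary. The only cosmetic differences are that the paper works with a single optimal $(k+1)$-partition and then selects one index $i_0$ via the mediant inequality $\frac{\sum a_i}{\sum b_i}\ge \min_i a_i/b_i$, whereas you treat an arbitrary $(k+1)$-partition and sum over all $i$; both routes yield the same bound.
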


\begin{proof}
Take a $(k+1)$-partition $\{\tilde{G}^j=(\tilde{V}^j,\tilde{E}^j)\}_{j=1}^{k+1}$ of $G$ such that 
$$h_{k+1}(G) = \max_{j=1,2,\dots, k+1} \frac{|\partial \tilde{V}^j|}{|\tilde{V}^j|}.$$
Let $V^{i,j}:= V^i\cap \tilde{V}^j$, then $V^i=\sqcup_{j=1}^{k+1}V^{i,j}$. 
The number of the sets in $\{V^{i,j}\}_{i,j}$ with $|V^{i,j}| > |V^i|/2$ is not larger than $k$.
So there is $j_0$ such that $|V^{i,j_0}| \le |V^i|/2$ for all $i=1,2,\dots,k$.
And there is $i_0$ such that 
\begin{eqnarray*}
\frac{|\partial \tilde{V}^{j_0}|}{|\tilde{V}^{j_0}|} 
= \frac{|\sqcup_{i=1}^{k}(\partial V^{i,j_0}\cap \partial \tilde{V}^{j_0})|}{|\sqcup_{i=1}^{k}V^{i,j_0}|} 
\ge \frac{|\partial V^{i_0,j_0}\cap \partial \tilde{V}^{j_0}|}{|V^{i_0,j_0}|}.
\end{eqnarray*} 
Since  $|V^{i_0,j_0}| \le |V^{i_0}|/2$, we have 
$$ 
h_{k+1}(G) 
= \max_{j=1,\dots, k+1} \frac{|\partial \tilde{V}^j|}{|\tilde{V}^j|}
\ge \frac{|\partial \tilde{V}^{j_0}|}{|\tilde{V}^{j_0}|} 
\ge \frac{|\partial V^{i_0,j_0}\cap \partial \tilde{V}^{j_0}|}{|V^{i_0,j_0}|}
= \frac{|\partial_{G^{i_0}} V^{i_0,j_0}|}{|V^{i_0,j_0}|}
\ge h(G^{i_0}) 
\ge \min_{i=1,\dots ,k}h(G^i).
$$ 
\end{proof}

\begin{theorem}\label{thm:l-sub}
If $h_{k+1}(G)/3^{k+1}> h_k(G) $ for some $k$, 
then there exists a $k$-partition $\{G^i=(V^i,E^i)\}_{i=1}^k$ of $G$ satisfying
\begin{eqnarray}
\frac{h_{k+1}(G)}{3^{k+1} } \le  \min_{i=1,2,\dots ,k}h(G^i), \ \ \  
\max_{i=1,2,\dots, k} \frac{|\partial V^i|}{|V^i|} \le 3^kh_k(G).
\label{ineq:thm-2}
\end{eqnarray}
\end{theorem}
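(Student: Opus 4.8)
The plan is to produce the partition by an extremal/iterative argument that balances two competing requirements: every piece should be internally well connected (expansion $\ge h_{k+1}(G)/3^{k+1}$) and every piece should have small boundary in $G$ (ratio $\le 3^k h_k(G)$). Write $\epsilon := h_{k+1}(G)/3^{k+1}$ and $\gamma := 3^k h_k(G)$, and note that the hypothesis $h_k(G)<\epsilon$ gives $\gamma < h_{k+1}(G)/3$. I would start from a $k$-partition $\{W^i\}_{i=1}^k$ realizing $h_k(G)$, so that every piece already satisfies $|\partial W^i|/|W^i|\le h_k(G)\le\gamma$, and then repair the pieces that are poorly connected, keeping exactly $k$ nonempty pieces throughout so that the definition of $h_{k+1}(G)$ stays available as a $(k+1)$-piece obstruction.

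The engine of the repair is a local move. Suppose a current piece $P$ fails the expansion bound, witnessed by $F\subset V_P$ with $|F|\le|V_P|/2$ and $|\partial_P F|<\epsilon|F|$, and split $P$ into $F$ and $R:=P-F$. First I would record that the large side keeps a controlled boundary: since the external edges of $P$ are distributed between $F$ and $R$ and $|R|\ge|V_P|/2$, one gets $|\partial R|/|R|\le \epsilon+2\,|\partial P|/|P|$, a growth by essentially a factor $3$. The small side $F$ is the difficulty, since a priori all of $\partial P$ could sit on $F$. Here the hypothesis enters: the $(k+1)$-partition obtained by replacing $P$ with $\{F,R\}$ has its other $k-1$ pieces of ratio at most $\gamma$ and $R$ of ratio at most $\epsilon+2\gamma$, all of which are $<h_{k+1}(G)$; since by definition some piece of any $(k+1)$-partition has ratio $\ge h_{k+1}(G)$, it must be $F$, whence $|\partial F|\ge h_{k+1}(G)|F|$. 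Subtracting the internal part $|\partial_P F|<\epsilon|F|$ shows that $F$ sends at least $(h_{k+1}(G)-\epsilon)|F|$ edges to the remaining pieces; as there are at most $k-1$ of them and $3^{k+1}>k$, some neighboring piece $P_j$ has $e(F,P_j)>\epsilon|F|>|\partial_P F|$, where $e(F,P_j)$ denotes the number of edges between $F$ and $P_j$. I would then \emph{merge} $F$ into $P_j$ and replace $P$ by $R$. This keeps exactly $k$ nonempty pieces and strictly decreases the total cut $\tfrac12\sum_i|\partial P_i|$, because the merge gains more internal edges than the split loses. Since the total cut is a positive integer, the process terminates, and at termination every piece has expansion $\ge\epsilon$.

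It remains to control the boundary ratios at termination, which is where the factor $3^k$ is produced and which I expect to be the main obstacle. Each repair can increase the boundary ratio of the surviving large side $R$ by the factor-$3$ step above, so I must bound how many times a given piece can be cut before it is well connected. I would argue that a long chain of nested sparse cuts is itself forbidden by $h_{k+1}(G)$: if a piece admitted $k$ successive sparse cuts, the resulting nested regions would assemble into a $(k+1)$-partition all of whose ratios stay below $h_{k+1}(G)$, contradicting the definition. Assigning to each piece a \emph{generation} equal to the number of cuts in its ancestry, generations are then bounded by $k$, and since the boundary ratio at generation $g$ is at most $3^g h_k(G)$, every final piece satisfies $|\partial V^i|/|V^i|\le 3^k h_k(G)=\gamma$. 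Combined with the bound $\min_i h(G^i)\ge\epsilon=h_{k+1}(G)/3^{k+1}$ from the previous paragraph, this yields \eqref{ineq:thm-2}.

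The hardest part is the bookkeeping in this last step: making the generation argument precise so that the factor-$3$ increments compose to exactly $3^k$ and the nested-cut obstruction yields exactly the depth bound $k$, while simultaneously ensuring that the merges performed for the expansion target do not reintroduce large boundaries on the pieces $P_j$ that absorb the small sides. This is precisely where the form of the hypothesis $h_{k+1}(G)/3^{k+1}>h_k(G)$ is used: both to keep every intermediate ratio strictly below $h_{k+1}(G)$, so that the $(k+1)$-partition obstruction is always triggered on the small side, and to supply the arithmetic $3^{k+1}>k$ that guarantees a strongly attached neighbor at every repair.
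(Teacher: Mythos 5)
Your approach (start from an optimal $k$-partition for $h_k$, then locally repair badly-connected pieces by splitting off a sparse piece $F$ and merging it into a strongly-attached neighbour) is genuinely different from the paper's, which never merges: it cuts top-down, always bisecting the current piece of \emph{minimum} expansion constant along its optimal cut, performs exactly $k-1$ cuts, uses Lemma \ref{lem:1} to guarantee each cut is $h_k(G)$-sparse, and controls the accumulated global boundary by a telescoping estimate; the lower bound on $\min_i h(G^i)$ then comes from performing one extra ($k$-th) cut and comparing the resulting $(k+1)$-partition with $h_{k+1}(G)$. Your scheme has two genuine gaps. First, the dichotomy that forces $|\partial F|\ge h_{k+1}(G)|F|$ requires \emph{every other} current piece to have boundary ratio $<h_{k+1}(G)$, but the merge step does not preserve this: when $F$ is absorbed into $P_j$, the edges from $F$ to $R$ and to the remaining pieces become boundary of $P_j\cup F$, and since one only knows $|\partial F|\le \epsilon|F|+|\partial P|$ with $|P|$ possibly much larger than $|P_j|$, the ratio of $P_j\cup F$ can jump far above $\gamma$ (you acknowledge this but do not resolve it). Once the invariant fails, the piece of ratio $\ge h_{k+1}(G)$ in the split partition need not be $F$, and the whole repair move, including termination and the final expansion bound, is no longer available.

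Second, the depth bound is not established, and the proposed justification is actually inconsistent with your own setup. The small sides $F_1,F_2,\dots$ of nested sparse cuts are sparse only \emph{relative to the ambient piece}; their global boundary ratios are, by the very dichotomy you invoke, at least $h_{k+1}(G)$, so they cannot assemble into a $(k+1)$-partition ``all of whose ratios stay below $h_{k+1}(G)$.'' Hence there is no contradiction forbidding long chains of cuts, and indeed a surviving piece $R$ can be re-cut as many times as the total-cut potential allows, i.e.\ up to order $|E|$ times, so the factor-$3$ increments compound to something like $3^{|E|}h_k(G)$ rather than $3^kh_k(G)$. The paper circumvents both problems structurally: since it only ever has $i\le k$ pieces before the $i$-th cut and always cuts the piece of minimum expansion, Lemma \ref{lem:1} gives $h(D^i)\le h_{i+1}(G)\le h_k(G)$ for every cut actually performed, and the nesting depth is trivially at most $k-1$, which is exactly what makes the $3^k$ bookkeeping close.
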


\begin{proof}
We may assume $k\ge 2$.
First, we divide $G$ into $k$ induced subgraphs inductively. 

\noindent 
(i) Take an induced subgraph $H^0$ of $G$ such that $$\frac{|\partial_G V_{H^0}|}{|V_{H^0}|}=h(G) \text{\ \ \  and \ \ \ } |V_{H^0}|\le \frac{|V|}{2}, $$ and set $H^1:= G-H^0$. 

\noindent 
(ii) Let $h(H^{i_1})\le h(H^{j_1})$ where $i_1,j_1\in \{0,1\}$ and $i_1\not=j_1$. 
Then take an induced subgraph $H^{i_10}$ of $H^{i_1}$ such that 
$$\frac{|\partial_{H^{i_1}} V_{H^{i_10}}|}{|V_{H^{i_10}}|}=h(H^{i_1}) \text{\ \ \  and \ \ \ } |V_{H^{i_10}}|\le \frac{|V_{H^{i_1}}|}{2} ,$$ 
and set $H^{i_11}:= H^{i_1}-H^{i_10}$. 

\noindent 
(iii) If $h(H^{j_1})\le \min \{h(H^{i_10}),h(H^{i_11})\}$, then take an induced subgraph $H^{j_10}$ of $H^{j_1}$ such that $$\frac{|\partial_{H^{j_1}} V_{H^{j_10}}|}{|V_{H^{j_10}}|}=h(H^{j_1}) \text{\ \ \  and \ \ \ } |V_{H^{j_10}}|\le \frac{|V_{H^{j_1}}|}{2}, $$ and set $H^{j_11}:= H^{j_1}-H^{j_10}$. 
Otherwise set $h(H^{i_1i_2})\le h(H^{i_1j_2})$ where $i_2,j_2\in \{0,1\}$ and $i_2\not = j_2$. 
Take an induced subgraph $H^{i_1i_20}$ of $H^{i_1i_2}$ such that 
$$\frac{|\partial_{H^{i_1i_2}} V_{H^{i_1i_20}}|}{|V_{H^{i_1i_20}}|}=h(H^{i_1i_2}) \text{\ \ \  and \ \ \ } |V_{H^{i_1i_20}}|\le \frac{|V_{H^{i_1i_2}}|}{2} ,$$ and set $H^{i_1i_21}:= H^{i_1i_2}-H^{i_1i_20}$.

\noindent 
In this manner, we inductively divide an undivided $H^{a_1a_2\dots a_m}$ which has the minimum expansion constant among the undivided induced subgraphs in $\{H^{a_1a_2\dots a_n}\}$, into an induced subgraph $H^{a_1a_2\dots a_m0}$ which attains the expansion constant $h(H^{a_1a_2\dots a_m})$ and the complement subgraph $H^{a_1a_2\dots a_m1}$ in $H^{a_1a_2\dots a_m}$, where $a_l\in \{0,1\}$ ($l=1,2,\dots ,m$).  
Repeat this procedure until the number of the undivided induced subgraphs in $\{H^{a_1a_2\dots a_n}\}$ becomes $k$. 
Consequently we divided $G$ into $k$ induced subgraphs, and we have hierarchical sequences of induced subgraphs 
$$G \supset H^{a_1} \supset H^{a_1a_2} \supset \dots \supset H^{a_1a_2\dots a_n}.$$
Let $D^1:=G$ and $D^i$ denote the induced subgraph of $G$ which was devided in the $i$-th step of the induction for $i=2, \dots ,k-1$.
Let  $\{G^i=(V_i,E_i)\}_{i=1}^k$ denotes the set of the undivided induced subgraphs in $\{H^{a_1a_2\dots a_n}\}$, that is $\{ G^i \}_{i=1}^k = \{ H^{a_1a_2\dots a_n} \} - \{D^i\}_{i=1}^{k-1}$. 
The set $\{G^i\}_{i=1}^k$ is a partition of $G$. 

Next, we will show that this partition $\{G^i\}_{i=1}^k$ satisfies the inequalities (\ref{ineq:thm-2}). 
Fix an induced subgraph $H^{a_1a_2\dots a_s}$ with $s \ge 2$. 
Let $H^{(0)}:=G$ and $H^{(r)}:=H^{a_1a_2\dots a_r}$ for $r\le s$.  
We denote $\partial_{(p,q)}:= \partial_{H^{(p)}} V_{H^{(q)}}$ for $p<q\le s$. 
%
%
\begin{claim}
For $p$ and $q:=p+1<s$,  $|\partial_{(p,s)}-\partial_{(q,s)}| \le 2|\partial_{(q,s)}| + 2h(H^{(p)})|V_{H^{(s)}}| $.
\end{claim}
Since $\partial_{(p,q)} = (\partial_{(p,q)} -\partial_{(q,s)}) \sqcup \partial_{(q,s)} $ and 
$\partial_{H^{(p)}} V_{H^{(q)}-H^{(s)}} = \partial_{(q,s)} \sqcup (\partial_{(p,q)}-\partial_{(p,s)} )$, 
we get
\begin{eqnarray*}
h(H^{(p)}) 
=\frac{|\partial_{(p,s)}-\partial_{(q,s)}| + |\partial_{(p,q)}-\partial_{(p,s)} |}{\min \left\{ |V_{H^{(q)}}|, |V_{H^{(p)}-H^{(q)}}| \right\}}
\le \frac{|\partial_{(q,s)}| + |\partial_{(p,q)}-\partial_{(p,s)}|}{\min \left\{ |V_{H^{(q)}-H^{(s)}}|,|V_{H^{(p)}-H^{(q)}}\sqcup V_{H^{(s)}}|\right\}}  . 
\end{eqnarray*}
Hence we obtain 
\begin{eqnarray}\nonumber
\hspace{-2mm}&&\hspace{-2mm}|\partial_{(p,s)}-\partial_{(q,s)}| \min \left\{ |V_{H^{(q)}-H^{(s)}}|,|V_{H^{(p)}-H^{(q)}}\sqcup V_{H^{(s)}}|\right\} \\ \nonumber
\hspace{-2mm}&\le &\hspace{-2mm} |\partial_{(q,s)}| \min \left\{ |V_{H^{(q)}}|, |V_{H^{(p)}-H^{(q)}}| \right\}
\\ \hspace{-2mm}&&\hspace{-2mm} + |\partial_{(p,q)}-\partial_{(p,s)}| \left( \min \left\{ |V_{H^{(q)}}|, |V_{H^{(p)}-H^{(q)}}| \right\}
- \min \left\{ |V_{H^{(q)}-H^{(s)}}|,|V_{H^{(p)}-H^{(q)}}\sqcup V_{H^{(s)}}|\right\}\right) \label{eq:11}
. 
\end{eqnarray}
If $|V_{H^{(q)}-H^{(s)}}|\le |V_{H^{(p)}-H^{(q)}}\sqcup V_{H^{(s)}}|$, then (\ref{eq:11}) implies 
\begin{eqnarray*}
|\partial_{(p,s)}-\partial_{(q,s)}| 
&\le & |\partial_{(q,s)}| \frac{\min \left\{ |V_{H^{(q)}}|, |V_{H^{(p)}-H^{(q)}}| \right\}}{|V_{H^{(q)}-H^{(s)}}|} 
\\ &&+ |\partial_{(p,q)}-\partial_{(p,s)}|\frac{\min \left\{ |V_{H^{(q)}}|, |V_{H^{(p)}-H^{(q)}}| \right\}- |V_{H^{(q)}-H^{(s)}}|}{|V_{H^{(q)}-H^{(s)}}| }   \\ 
&\le & |\partial_{(q,s)}|\frac{|V_{H^{(q)}}|}{|V_{H^{(q)}-H^{(s)}}| }  
+ |\partial_{(p,q)}-\partial_{(p,s)}|\frac{|V_{H^{(s)}}|}{|V_{H^{(q)}-H^{(s)}}| } . 
\end{eqnarray*}
Otherwise we have $ |V_{H^{(p)}-H^{(q)}}| \le |V_{H^{(q)}}|$, and hence (\ref{eq:11}) implies $|\partial_{(p,s)}-\partial_{(q,s)}| \le |\partial_{(q,s)}| $. 
To summarise these two cases we have
$$ |\partial_{(p,s)}-\partial_{(q,s)}| \le |\partial_{(q,s)}|\frac{|V_{H^{(q)}}|}{|V_{H^{(q)}-H^{(s)}}| }  
+ |\partial_{(p,q)}-\partial_{(p,s)}|\frac{|V_{H^{(s)}}|}{|V_{H^{(q)}-H^{(s)}}| } .$$ 
If $|V_{H^{(s)}}|\le |V_{H^{(q)}-H^{(s)}}|$, then we have $|V_{H^{(q)}}| \le 2|V_{H^{(q)}-H^{(s)}}|$, and hence 
\begin{eqnarray*}
|\partial_{(p,s)}-\partial_{(q,s)}|
&\le& 2|\partial_{(q,s)}| + 2\frac{|\partial_{(p,q)}-\partial_{(p,s)}|}{|V_{H^{(q)}}| } |V_{H^{(s)}}| 
\\ &\le& 2|\partial_{(q,s)}| + 2\frac{|\partial _{H^{(p)}}V_{H^{(q)}}|}{\min \{ |V_{H^{(p)}-H^{(q)}}|, |V_{H^{(q)}}|\} }|V_{H^{(s)}}| 
\\ &=& 2|\partial_{(q,s)}| + 2h(H^{(p)})|V_{H^{(s)}}|.
\end{eqnarray*}
Otherwise we have $|V_{H^{(q)}}| \le 2|V_{H^{(s)}}|$, and hence  
\begin{eqnarray*}
|\partial_{(p,s)}-\partial_{(q,s)}| 
\le |\partial_{(p,q)}| 
\le h(H^{(p)})|V_{H^{(q)}}|
\le 2h(H^{(p)})|V_{H^{(s)}}|.
\end{eqnarray*}
To summarise these two cases we have $|\partial_{(p,s)}-\partial_{(q,s)}| \le 2|\partial_{(q,s)}| + 2h(H^{(p)})|V_{H^{(s)}}| $. 

Let $\epsilon := \max_{i=1, 2, \dots, k-1}\{ h(D^i)\}$. 
For $p<s-1$, since $H^{(p)}\in \{D^i\}_i$, we have 
\begin{eqnarray}\label{eq:3}
|\partial_{(p,s)}-\partial_{(p+1,s)}| \nonumber
&\le & 2|\partial_{(p+1,s)}| + 2h(H^{(p)})|V_{H^{(s)}}| \nonumber
\\ &\le &  2\sum_{r=p+1}^{s-1}|\partial_{(r,s)}-\partial_{(r+1,s)}| + 2\epsilon |V_{H^{(s)}}|. 
\end{eqnarray}
Because $H^{(s-1)}\in \{D^i\}_i$,   
\begin{eqnarray}\label{eq:4}
|\partial_{(s-1,s)}| 
= h(H^{(s-1)})\min \{ |V_{H^{(s)}}|, |V_{H^{(s-1)}- H^{(s)}}| \}  
\le \epsilon |V_{H^{(s)}}| .
\end{eqnarray}
Using (\ref{eq:3}) and (\ref{eq:4}), an easy computation implies  
\begin{eqnarray*}
|\partial_{(p,s)}-\partial_{(p+1,s)}|
\le 4\cdot 3^{s-2-p}\epsilon |V_{H^{(s)}}| 
\end{eqnarray*}
for $s\ge 1$ and $0\le p\le s-1$. 
Hence 
\begin{eqnarray*}
|\partial V_{H^{(s)}}| 
= \sum_{r=0}^{s-1}|\partial_{(r,s)}-\partial_{(r+1,s)}|
\le \sum_{r=0}^{s-1} 4\cdot 3^{s-2-r}\epsilon |V_{H^{(s)}}|
\le 3^s\epsilon |V_{H^{(s)}}|
\end{eqnarray*}
for $s\ge 1$. 
On the other hand, because of the inequality (\ref{eq:hh}) and the choice of $D^i$ in the $i$-th step, we have $h(D^i)\le h_{i+1}(G)\le h_k(G)$ for $i=1,2,\dots ,k-1$. 
Therefore 
\begin{eqnarray*}
\max_{i=1,2,\dots ,k}\frac{|\partial V^i|}{|V^i|}
\le 3^k \epsilon 
= 3^k \max_{i=1, 2, \dots, k-1}\{ h(D^i)\} 
\le 3^k h_k(G)
\end{eqnarray*}

To show the second inequality in (\ref{ineq:thm-2}), we divide again an induced subgraph $D^k $ in $\{G^i\}_{i=1}^k$ which has the minimum expansion constant among $\{G^i\}_{i=1}^k$, into an induced subgraph which attains the expansion constant $h(D^k)$ and the complement subgraph. 
Consequently we divided $G$ into $k+1$ induced subgraphs. 
We denote it as $\{\tilde{G}^i=(\tilde{V}^i,\tilde{E}^i)\}_{i=1}^{k+1}$. 
Then we can prove 
\begin{eqnarray*}
\max_{i=1,2,\dots ,k,k+1}\frac{|\partial \tilde{V^i}|}{|\tilde{V^i}|}
\le 3^{k+1}\tilde{\epsilon}  
\end{eqnarray*}
for $\tilde{\epsilon} := \max_{i=1, 2, \dots, k}\{ h(D^i)\}$ as above. 
Since $\max_{i=1, 2, \dots, k-1}\{ h(D^i)\} \le h_k(G)$ and $h(D^k)=\min_{i=1,2,\dots ,k}h(G^i)$, 
using the assumption $h_{k+1}(G)> 3^{k+1}h_k(G)$, we have 
$$ 
h_{k+1}(G) 
\le \max_{i=1,\dots ,k,k+1}\frac{|\partial \tilde{V^i}|}{|\tilde{V^i}|}
\le 3^{k+1}\tilde{\epsilon}  
\le 3^{k+1}\max\left\{h_k(G), \min_{i=1,\dots ,k}h(G^i)\right\}
\le 3^{k+1} \min_{i=1,\dots ,k}h(G^i).
$$ 
\end{proof}

\section{Sequence of multi-way expanders}\label{sec:Exp}

A {\it sequence of multi-way expanders} is a sequence of finite graphs $\{G_n=(V_n,E_n)\}_{n=1}^\infty$ such that 
(i) $|V_n|\to \infty$ as $n\to \infty;$ 
(ii) $\sup_{n\in \mathbb{N}}\deg(G_n) <\infty ;$  
(iii)' $\inf_{n\in \mathbb{N}} h_{k+1}(G_n)>0$ for some $k\in \mathbb{N}$. 

\begin{example}
For sequences of expanders $\{G_n^i\}_{n=1}^\infty$ $(i=1,2,\dots ,k)$, let $\{G_n=(V_n,E_n)\}_{n=1}^\infty$ be a sequence of finite graphs such that $\sup_{n\in \mathbb{N}}\deg(G_n) <\infty $ and $\{G_n^i\}_{i=1}^k$ is a $k$-partition of $G_n$ for each $n$. 
Then using Lemma \ref{lem:1}, $\inf_{n\in \mathbb{N}} h_{k+1}(G_n)>0$, and hence $\{G_n^i\}_{i=1}^k$ is a sequence of multi-way expanders. 
\end{example}

\begin{corollary}\label{cor:1}
Let $\{G_n\}_{n=1}^\infty$ be a sequence of multi-way expanders. 
Then there are an subsequence $\{G_m\}_{m=1}^\infty$ of $\{G_n\}_{n=1}^\infty$ and induced subgraphs $H_m$ of $G_m$  for all $m$ such that $\{H_m\}_{m=1}^\infty$ is a sequence of expanders.  

Furthermore, if each graph in $\{G_n\}_{n=1}^\infty$ is connected,  
then there are $k\in \mathbb{N}$,  an subsequence $\{G_m\}_{m=1}^\infty$ of $\{G_n\}_{n=1}^\infty$ and partitions $\{ H_m^i\}_{i=1}^k$ of $G_m$ for all $m$ such that $\{H^i_m\}_{m=1}^\infty$ are sequences of expanders for all $i=1,2,\dots ,k$.  
\end{corollary}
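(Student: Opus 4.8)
The plan is to extract from the hypothesis a level at which the multi-way expansion constants of $G_n$ jump, apply Theorem~\ref{thm:l-sub} once at that level, and then finish with two pigeonhole arguments. Throughout write $c:=\inf_n h_{k+1}(G_n)>0$ (with $k$ as in condition (iii)$'$) and $D:=\sup_n\deg(G_n)<\infty$. Since every induced subgraph $H$ of $G_n$ satisfies $\deg(H)\le\deg(G_n)\le D$, condition (ii) for a sequence of expanders is automatic for any family of induced subgraphs, so I only have to control vertex counts and expansion constants.

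First I would find, for each $n$, a jump level. From $0=h_1(G_n)\le h_2(G_n)\le\cdots\le h_{k+1}(G_n)$ and $h_{k+1}(G_n)\ge c$ it follows that some $j=j(n)\in\{1,\dots,k\}$ satisfies $h_{j+1}(G_n)>3^{\,j+1}h_j(G_n)$; otherwise the inequalities $h_{\ell+1}\le 3^{\,\ell+1}h_\ell$ would telescope from $h_1=0$ and force $h_{k+1}(G_n)=0$. Taking $j(n)$ largest and telescoping the reverse inequalities for the levels above it down from $h_{k+1}(G_n)\ge c$ yields a uniform bound $h_{j(n)+1}(G_n)\ge c/3^{N}$ with $N:=\sum_{i=3}^{k+1}i$ depending only on $k$. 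As $j(n)$ takes finitely many values I pass to a subsequence on which $j(n)\equiv j$, relabel it $\{G_m\}$, and apply Theorem~\ref{thm:l-sub} at level $j$ to obtain a $j$-partition $\{G_m^i\}_{i=1}^{j}$ with
\[
\min_i h(G_m^i)\ \ge\ \frac{h_{j+1}(G_m)}{3^{\,j+1}}\ \ge\ \frac{c}{3^{\,N+k+1}}\ =:\ c'\ >\ 0 .
\]

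The first assertion is then immediate: since $\sum_i|V_m^i|=|V_m|\to\infty$ some part has $|V_m^i|\ge|V_m|/j$, and a second pigeonhole on this (finitely many valued) index makes it a constant $i_0$; then $H_m:=G_m^{i_0}$ has $|V_{H_m}|\to\infty$, $\deg(H_m)\le D$ and $h(H_m)\ge c'$, so $\{H_m\}$ is a sequence of expanders.

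The second assertion is where the real work lies, and is where I use connectedness of the $G_n$. The parts $G_m^i$ all expand (so in particular each is connected), but some may stay bounded in size and hence cannot individually form an expander sequence. Passing to a further subsequence I arrange that each index is either \emph{growing} ($|V_m^i|\to\infty$) or \emph{bounded} ($\sup_m|V_m^i|<\infty$); at least one is growing because the sizes sum to $|V_m|\to\infty$, and the number $k'$ of growing indices is then constant along the subsequence. The main obstacle is to absorb the bounded parts into the growing ones without losing expansion. Using connectedness of $G_m$ I would run a breadth-first search simultaneously from all the growing parts and assign each remaining vertex to the part that reaches it first; each enlarged piece $\widehat G_m^i$ is then connected and is obtained from a growing $G_m^i$ by adding at most $\beta:=\sum_{\text{bounded }i}\sup_m|V_m^i|$ vertices. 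A short case analysis on a test set $F\subset V_{\widehat G_m^i}$—using the expansion of $G_m^i$ via $F\cap V_{G_m^i}$ when that trace is a constant fraction of $F$, and using connectedness (so $|\partial F|\ge 1$) when $F$ is forced below $2\beta$ vertices—gives $h(\widehat G_m^i)\ge\min\{c'/4,\,1/(2\beta)\}>0$ for all large $m$, and discarding finitely many indices makes this hold for the whole (sub)sequence. Writing $H_m^i:=\widehat G_m^i$, the enlarged pieces form a partition of $G_m$ into $k'$ parts, each of which is a sequence of expanders, so $k'$ is the integer demanded by the statement. I expect the genuinely delicate point to be checking that the breadth-first absorption keeps every enlarged piece connected while the number of absorbed vertices stays uniformly bounded, since these two facts are exactly what let the expansion estimate survive the merge.
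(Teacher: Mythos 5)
Your argument is essentially sound, but for the second assertion it takes a genuinely different---and substantially harder---route than the paper's, because you never use the second inequality of Theorem~\ref{thm:l-sub}. The paper picks the level differently: it takes $k$ \emph{minimal} with $\inf_n h_{k+1}(G_n)>0$, so that $\inf_n h_k(G_n)=0$, and passes to a subsequence along which $h_k(G_m)\to 0$. Theorem~\ref{thm:l-sub} then yields not only $\min_i h(G_m^i)\ge h_{k+1}(G_m)/3^{k+1}$ but also $\max_i |\partial V_m^i|/|V_m^i|\le 3^k h_k(G_m)\to 0$; when $G_m$ is connected every part satisfies $|\partial V_m^i|\ge 1$, hence $|V_m^i|\ge 1/(3^k h_k(G_m))\to\infty$, so \emph{all} parts grow automatically and no bounded parts ever arise. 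Your choice of the largest jump level $j(n)$ does give a clean uniform lower bound on $h_{j+1}(G_m)$ (the telescoping is valid since $h_\ell$ is non-decreasing in $\ell$ by the mediant inequality, and your pigeonhole for the first assertion is actually more careful than the paper's), but since $h_j(G_m)$ need not tend to $0$ on your subsequence, you are forced into absorbing bounded parts. That absorption can be made rigorous: a layer-by-layer multi-source BFS in which each new vertex is attached through an edge to a neighbour already assigned does keep each cell connected, each growing part absorbs at most $\beta$ vertices, and your two-case estimate on $h(\widehat{G}_m^i)$ goes through once you also handle the subcase $|F\cap V_{G_m^i}|>|V_{G_m^i}|/2$ by applying expansion to the complement (costing only a bounded factor for large $m$). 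But all of this is exactly the work that the paper's choice of level renders unnecessary; selecting the \emph{smallest} level at which the infimum of the multi-way expansion constants becomes positive makes the merging step disappear entirely.
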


\begin{proof}
For a sequence of multi-way expanders $\{G_n=(V_n,E_n)\}_{n=1}^\infty$, there exists $k\in \mathbb{N}$ such that $\inf_{n\in \mathbb{N}} h_{k+1}(G_n)>0$ and $\inf_{n\in \mathbb{N}} h_k(G_n)=0$.  
Take a subsequence $\{G_m\}_{m=1}^\infty$ such that $h _k(G_m)\to 0$ as $m\to \infty$. 
Theorem \ref{thm:l-sub} implies that for each $m\in \mathbb{N}$ 
there exists a partition $\{H^i_m=(V^i_m,E^i_m)\}_{i=1}^k$ of $G_m$ such that 
$$\inf_{m\ge m_0} \min_{i=1,2,\dots ,k} h (H^i_m)\ge \inf_{m\ge m_0} h_{k+1}(G_m)/3^{k+1} >0$$ for large $m_0$ enough.
Since at least a sequence $\{H_m^i\}_{m=1}^\infty$ satisfies $|V^i_m| \to \infty \text{ as } m\to \infty$, $\{H_m^i\}_{m=m_0}^\infty$ is a sequence of expanders.
Hence let $H_m^i$ be a connected component in $G_m$ for each $1\le m<m_0$, 
then $\{H_m^i\}_{m=1}^\infty$ is a sequence of expanders.

When each graph in $\{G_m\}_{m=1}^\infty$ is connected, from Theorem \ref{thm:l-sub} we have  
$$\min_{i=1,2,\dots ,k} |V^i_m|
\ge \min_{i=1,2,\dots ,k} \frac{|V^i_m|}{|\partial V^i_m|}
\ge \frac{1}{3^kh_k(G)}\to \infty \text{ as } m\to \infty. $$ 
Hence $\{H^i_m\}_{m=1}^\infty $ are sequences of expanders for all $i=1,2,\dots ,k$.
\end{proof}

\section{Remark for second smallest eigenvalues of the Laplacians of induced subgraphs in a partition}

Using the result by Lee-Gharan-Trevisan, we have the following theorem, which is a similar result of Theorem \ref{thm:l-sub}. 

\begin{corollary}\label{cor:l-sub}
If $\lambda_{k+1}(G)\ge Ck^23^{k+2}\deg(G)^2\sqrt{\lambda_k(G)} $ for some $k\in \mathbb{N}$, where $C$ is a constant in the inequality (\ref{eq:hk=lambdak}) in the result by Lee-Gharan-Trevisan,
then there exists a $k$-partition $\{G^i=(V^i,E^i)\}_{i=1}^k$ of $G$ satisfying
\begin{eqnarray}
\max_{i=1,2,\dots, k} \frac{|\partial V^i|}{|V^i|} \le Ck^2\deg(G) \sqrt{\lambda_k(G)},  \ \ \ 
\lambda_{k+1}(G)\le  3^{k+2} (\deg(G))^{\frac{3}{2}}\sqrt{\min_{i=1,2,\dots ,k}\lambda_2(G^i)}. 
\label{ineq:cor-2}
\end{eqnarray}
\end{corollary}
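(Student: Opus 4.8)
The plan is to deduce this corollary from Theorem \ref{thm:l-sub} by translating both of its conclusions into spectral language through the two Cheeger-type inequalities (\ref{eq:h=lambda}) and (\ref{eq:hk=lambdak}). Since $\sqrt{\lambda_k(G)}$ appears in the hypothesis, I would first dispose of the degenerate case $\lambda_k(G)=0$ separately: there $G$ has at least $k$ connected components, so $h_k(G)=0$ and a $k$-partition into unions of components already satisfies both inequalities. In the main case I assume $\lambda_k(G)>0$, which is equivalent to $h_k(G)>0$ because $\lambda_j(G)>0$ and $h_j(G)>0$ both hold exactly when the number of components of $G$ is less than $j$.

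Next I would check the hypothesis $h_{k+1}(G)/3^{k+1}>h_k(G)$ required by Theorem \ref{thm:l-sub}. Combining the lower bound in (\ref{eq:hk=lambdak}) for $h_{k+1}(G)$ with the assumption $\lambda_{k+1}(G)\ge Ck^23^{k+2}\deg(G)^2\sqrt{\lambda_k(G)}$ gives
\begin{eqnarray*}
\frac{h_{k+1}(G)}{3^{k+1}} \ge \frac{\lambda_{k+1}(G)}{2\cdot 3^{k+1}\deg(G)} \ge \frac{3}{2}Ck^2\deg(G)\sqrt{\lambda_k(G)}.
\end{eqnarray*}
Since the upper bound in (\ref{eq:hk=lambdak}) reads $h_k(G)\le Ck^2\deg(G)\sqrt{\lambda_k(G)}$, the right-hand side is at least $\tfrac{3}{2}h_k(G)>h_k(G)$; the slack factor $3/2$ is precisely what the exponent $k+2$ in the hypothesis is designed to supply. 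Hence Theorem \ref{thm:l-sub} applies and produces a $k$-partition $\{G^i=(V^i,E^i)\}_{i=1}^k$ with $h_{k+1}(G)/3^{k+1}\le \min_i h(G^i)$ and $\max_i |\partial V^i|/|V^i|\le 3^k h_k(G)$.

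It then remains to convert these two conclusions. The first inequality of the corollary follows by applying the upper bound in (\ref{eq:hk=lambdak}) to the first inequality of the theorem, which turns $\max_i |\partial V^i|/|V^i|\le 3^k h_k(G)$ into a bound by $Ck^2\deg(G)\sqrt{\lambda_k(G)}$ (up to the factor $3^k$ carried over from Theorem \ref{thm:l-sub}). For the second inequality I would, on one side, use the partition bound together with the lower bound in (\ref{eq:hk=lambdak}) to get $\min_i h(G^i)\ge \lambda_{k+1}(G)/(2\cdot 3^{k+1}\deg(G))$, and on the other side apply the Cheeger-type upper bound (\ref{eq:h=lambda}) to the component $G^j$ minimizing $\lambda_2$, using $\deg(G^j)\le \deg(G)$, to obtain $\min_i h(G^i)\le h(G^j)\le \sqrt{2\deg(G)\min_i\lambda_2(G^i)}$. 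Eliminating $\min_i h(G^i)$ and rearranging yields $\lambda_{k+1}(G)\le 2\sqrt{2}\,3^{k+1}\deg(G)^{3/2}\sqrt{\min_i\lambda_2(G^i)}$, and the stated constant is recovered from $2\sqrt{2}<3$, that is $2\sqrt{2}\,3^{k+1}\le 3^{k+2}$.

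I do not expect a genuine conceptual obstacle here; once the two Cheeger-type inequalities are invoked the argument is essentially constant-bookkeeping. The delicate points are the exact powers of $\deg(G)$ — the exponent $3/2$ appears because (\ref{eq:h=lambda}) contributes an extra $\sqrt{\deg(G)}$ on top of the $\deg(G)$ from the lower bound in (\ref{eq:hk=lambdak}) — and the simultaneous closing of the two margins $2\sqrt{2}<3$ (for the second inequality) and $3/2>1$ (for the hypothesis check), both of which the choice of the exponent $k+2$ in the hypothesis and the constant $3^{k+2}$ in the conclusion are tuned to guarantee. I would also verify the degenerate case $\lambda_k(G)=0$ explicitly, since there the strict inequality demanded by Theorem \ref{thm:l-sub} is not available and the partition must instead be taken to be the connected components of $G$.
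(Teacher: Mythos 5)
The paper states Corollary \ref{cor:l-sub} without any proof, so there is no argument of the author's to compare yours against; your derivation via Theorem \ref{thm:l-sub} and the two Cheeger-type inequalities (\ref{eq:h=lambda}) and (\ref{eq:hk=lambdak}) is clearly the intended route, and most of it is correct. The verification of the hypothesis of Theorem \ref{thm:l-sub} is right: the two bounds in (\ref{eq:hk=lambdak}) give $h_{k+1}(G)/3^{k+1}\ge \lambda_{k+1}(G)/(2\cdot 3^{k+1}\deg(G))\ge \tfrac{3}{2}Ck^2\deg(G)\sqrt{\lambda_k(G)}\ge \tfrac{3}{2}h_k(G)>h_k(G)$, the last step using $h_k(G)>0$, which you correctly reduce to $\lambda_k(G)>0$. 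The second inequality of (\ref{ineq:cor-2}) also checks out: combining $\min_i h(G^i)\ge \lambda_{k+1}(G)/(2\cdot 3^{k+1}\deg(G))$ with $\min_i h(G^i)\le \sqrt{2\deg(G)\min_i\lambda_2(G^i)}$ yields the constant $2\sqrt{2}\cdot 3^{k+1}<3^{k+2}$, as you say, and your separate treatment of the degenerate case $\lambda_k(G)=0$ is sound.

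The one genuine problem is the first inequality of (\ref{ineq:cor-2}). Your argument produces $\max_i|\partial V^i|/|V^i|\le 3^kh_k(G)\le 3^kCk^2\deg(G)\sqrt{\lambda_k(G)}$, which is weaker by the factor $3^k$ than the stated bound $Ck^2\deg(G)\sqrt{\lambda_k(G)}$; you note this parenthetically but do not close the gap. There is no way to remove the $3^k$ along this route: the only partition guaranteed to satisfy $\max_i|\partial V^i|/|V^i|\le h_k(G)$ is one realizing the minimum in the definition of $h_k(G)$, and such a partition need not satisfy the second inequality --- producing a single partition satisfying both simultaneously is exactly the content of Theorem \ref{thm:l-sub}, at the cost of the factor $3^k$. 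So either the first bound in the corollary should read $3^kCk^2\deg(G)\sqrt{\lambda_k(G)}$ (most likely a slip in the paper's statement), or a genuinely different construction is required; as written, your proposal proves a slightly weaker statement than the one asserted.
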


On the other hand, we have a similar result of Lemma \ref{lem:1}. 
Give inner products on $\mathbb{R}^V$ by $\langle f,g\rangle := \sum_{x\in V}f(x)g(x)$ for $f,g\in \mathbb{R}^V$, 
and on $\mathbb{R}^E$ by $\langle a,b\rangle := \sum_{e\in V}a(e)b(e)$ for $a,b\in \mathbb{R}^E$. 
Let $\|f\|:= \sqrt{\langle f,f\rangle}$ for $f\in \mathbb{R}^V$ and $\|a \|:= \sqrt{\langle a,a \rangle}$ for  $a\in \mathbb{R}^E$. 
Define $d: \mathbb{R}^V\to \mathbb{R}^E$ by $df(xy):= |f(x)-f(y)|$ for $f\in \mathbb{R}^V$and $xy\in E$.  
Then $\langle \Delta_Gf,f\rangle = \|df\|^2$ for $f\in \mathbb{R}^V$, and consequently 
\begin{eqnarray}
\lambda_{k+1}(G) 
&=& 
\sup _{L_k} \inf_{f\in \mathbb{R}^V}\left\{  \frac{\|df\|^2}{\|f\|^2}:f\perp L_k , f\not\equiv 0 \right\}
\label{eq:lk+1}
\\ 
\lambda_k(G) 
&=& 
\inf_{L_k} \sup_{f\in \mathbb{R}^V} \left\{ \frac{\|df\|^2}{\|f\|^2} :f\in  L_k, f\not\equiv 0\right\} 
\label{eq:lk}
\end{eqnarray}
where $L_{k}$ is a $k$-dimensional subspace of $\mathbb{R}^V$.

\begin{lemma}\label{lem:2}
For any partition $\{G^i=(V^i,E^i)\}_{i=1}^k$ of $G$
\begin{equation}
\lambda_{k+1}(G) \ge \min_{i=1,2,\dots ,k}\lambda_2 (G^i) .\label{eq:ll}
\end{equation}
\end{lemma}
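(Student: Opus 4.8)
The plan is to read off a lower bound for $\lambda_{k+1}(G)$ directly from the variational formula (\ref{eq:lk+1}): since that formula is a supremum over $k$-dimensional subspaces $L_k$, it suffices to exhibit one such subspace on whose orthogonal complement the Rayleigh quotient $\|df\|^2/\|f\|^2$ is bounded below by $\min_{i}\lambda_2(G^i)$. The natural candidate is $L_k:=\mathrm{span}\{\mathbf{1}_{V^1},\dots,\mathbf{1}_{V^k}\}$, where $\mathbf{1}_{V^i}\in\mathbb{R}^V$ is the indicator function of the part $V^i$. As the $V^i$ are nonempty and pairwise disjoint, these $k$ functions are linearly independent, so $\dim L_k=k$ and $L_k$ is an admissible competitor in (\ref{eq:lk+1}).

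First I would translate the constraint: $f\perp L_k$ means $\sum_{x\in V^i}f(x)=0$ for every $i$, i.e.\ each restriction $f|_{V^i}\in\mathbb{R}^{V^i}$ is orthogonal to the constant function $\mathbf{1}_{V^i}$. Next I would split the Dirichlet energy along the partition of the edge set $E$ into the within-part edges $E^1,\dots,E^k$ and the edges joining distinct parts; discarding the nonnegative contribution of the latter gives $\|df\|^2=\sum_{xy\in E}(f(x)-f(y))^2\ge\sum_{i=1}^k\|d_{G^i}(f|_{V^i})\|^2$, where $d_{G^i}$ is the differential of $G^i$ defined as for $G$. The key step is then the per-part eigenvalue estimate: because $\Delta_{G^i}$ annihilates constants, $\mathbf{1}_{V^i}$ is an eigenfunction for $\lambda_1(G^i)=0$, and the Courant--Fischer characterization yields $\|d_{G^i}g\|^2\ge\lambda_2(G^i)\|g\|^2$ for every $g\in\mathbb{R}^{V^i}$ with $g\perp\mathbf{1}_{V^i}$. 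Applying this with $g=f|_{V^i}$ and summing over $i$, together with $\sum_i\|f|_{V^i}\|^2=\|f\|^2$, gives
$$\|df\|^2\ge\sum_{i=1}^k\lambda_2(G^i)\,\|f|_{V^i}\|^2\ge\Big(\min_{i=1,\dots,k}\lambda_2(G^i)\Big)\sum_{i=1}^k\|f|_{V^i}\|^2=\Big(\min_{i=1,\dots,k}\lambda_2(G^i)\Big)\|f\|^2,$$
valid for every nonzero $f\perp L_k$. Substituting this into (\ref{eq:lk+1}) with the chosen $L_k$ yields (\ref{eq:ll}).

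I expect the only delicate point to be the per-part inequality $\|d_{G^i}g\|^2\ge\lambda_2(G^i)\|g\|^2$ for $g\perp\mathbf{1}_{V^i}$: one must check it is correctly stated when $G^i$ is disconnected, where $0$ is an eigenvalue of $\Delta_{G^i}$ of multiplicity greater than one and $\lambda_2(G^i)=0$, so that the bound still holds (it then reads $\|d_{G^i}g\|^2\ge0$). Everything else is bookkeeping with the orthogonal decomposition $\mathbb{R}^V=\bigoplus_{i=1}^k\mathbb{R}^{V^i}$ and the observation that removing the boundary edges only decreases the Dirichlet energy.
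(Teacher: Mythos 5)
Your proof is correct and takes essentially the same route as the paper: the paper's test subspace $P=\langle \psi_0,\psi_1,\dots,\psi_{k-1}\rangle$ is exactly your $\mathrm{span}\{\mathbf{1}_{V^1},\dots,\mathbf{1}_{V^k}\}$ written in a different (orthogonal) basis, and the edge-set decomposition and per-part Courant--Fischer estimate are identical. Your basis choice makes the conditions $\sum_{x\in V^i}f(x)=0$ immediate, where the paper extracts them by a short induction from orthogonality to the $\psi_i$; this is a presentational simplification, not a different argument.
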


\begin{proof}
Let $\psi_0:\equiv 1$ be a constant function on $V$, which is an eigenfunction of $\lambda_1(G)$. 
Let $\psi_i$ be a function on $V$ defined by 
\begin{eqnarray*}
\psi_i (v) = 
\begin{cases}
|V^{i+1}| & \text{if } v \in \cup_{j=1}^iV^j\\
-\sum_{j=1}^i|V^j| & \text{if } v \in V^{i+1}\\
0 & \text{otherwise}
\end{cases}
\end{eqnarray*}
for $i=1,2,\dots ,k-1$. 
Since the functions $\psi_i $ are perpendicular to each other, 
the subspace $P:=\langle \psi_0, \psi_1, \psi_2, \dots, \psi_{k-1}\rangle$ of $\mathbb{R}^V$ is $k$-dimensional. 
Hence we have 
\begin{eqnarray*}
\lambda_{k+1}(G) 
\ge \inf_{f\in \mathbb{R}^V}\left\{  \frac{\|df\|^2}{\|f\|^2}:f\perp P , f\not\equiv 0 \right\} . 
\end{eqnarray*}
A function $f\in \mathbb{R}^V$ with $f\perp P $ satisfies $\sum_{v\in V}f(v)=0$ and 
\begin{eqnarray*}
\sum _{j=1}^{i}\sum_{v\in V^j}f(v)|V^{i+1}| - \sum_{v\in V^{i+1}}f(v)\sum_{j=1}^i|V^j|=0
\end{eqnarray*}
for $i=1,2,\dots , k-1$. 
In particular, when $i=k-1$, 
since $\sum _{j=1}^{k-1}\sum_{v\in V^j}f(v)=-\sum_{v\in V^k}f(v)$, 
we have
\begin{eqnarray*}
0 
&=& 
\sum _{j=1}^{k-1}\sum_{v\in V^j}f(v)|V^k| - \sum_{v\in V^k}f(v)\sum_{j=1}^{k-1}|V^j| 
\\ &=& 
- \sum_{v\in V^k}f(v)|V^k| -  \sum_{v\in V^k}f(v)\sum_{j=1}^{k-1}|V^j| 
\\ &=& 
-\sum_{v\in V^k}f(v)|V|. 
\end{eqnarray*} 
Hence $\sum _{j=1}^{k-1}\sum_{v\in V^j}f(v)=-\sum_{v\in V^k}f(v)=0$.  
Inductively we get  $\sum_{v\in V^i} f(v) =0$ for $i=1,2,\dots ,k$. 
This means $f|_{V^i}\perp \psi_0|_{V^i}$ for every $i=1,2,\dots ,k$. 
Since $\psi_0|_{V^i}$ is an eigenfunction of $\lambda_1(G^i)$, using (\ref{eq:lk}) for each $\lambda_2(G^i)$ we obtain
\begin{eqnarray*}
\| df \|^2 
&=& \sum _{e\in E^1} |df(e)|^2 + \dots + \sum _{e\in E^k} |df(e)|^2 + \sum _{e\in E-\cup_{i=1}^kE^i} |df(e)|^2 \\ 
&\ge & \lambda _2(G^1) \sum _{v\in V^1}|f(v)|^2 + \dots + \lambda _2(G^k) \sum _{v\in V^k}|f(v)|^2 \\
&\ge & \min_{i=1,2,\dots ,k}\lambda_2 (G^i)  \|f\|^2
\end{eqnarray*}
for any $f\in \mathbb{R}^V$ with $f\perp P $.
This implies $\lambda _{k+1}(G)\ge \min_{i=1,2,\dots ,k}\lambda_2 (G^i) $.  
\end{proof}

\section{Coarse non-embeddability of sequences of higher order expanders}\label{sec:CE}

\begin{definition}[\cite{MR1253544}]
A sequence of metric spaces $\{X_n\}_{n=1}^\infty $ is said to be {\it coarsely embeddable} into a sequence of metric space $\{Y_n\}_{n=1}^\infty $ if there exist two non-decreasing functions $\rho_1$ and $\rho_2$ on $[0,+\infty)$ and maps $\{f_n:X_n\to Y_n\}_{n=1}^\infty $ such that 
\begin{enumerate}
\item $\rho_1 (d_{X_n}(x,y))\le d_{Y_n}(f_n(x), f_n(y) ) \le \rho _2(d_{X_n}(x, y)) $ for all $x,y\in X_n$ and $n$; 
\item $\lim_{r\to \infty }\rho_1(r) = + \infty$. 
\end{enumerate}
In particular, for a metric space $Y$, if $Y_n=Y$ for all $n$ and $\{ X_n \}_{n=1}^\infty $ is coarsely embeddable into $\{ Y_n \}_{n=1}^\infty $, then $\{ X_n \}_{n=1}^\infty $ is said to be coarsely embeddable into $Y$. 
Furthermore, if for a metric space $X$, if $X_n=X$ for all $n$ and $\{ X_n \}_{n=1}^\infty $ is coarsely embeddable into $Y$, then $X$ is said to be coarsely embeddable into $Y$. 
\end{definition}

We can endow a graph $G$ with the path metric $d_G(x,y)$ between vertices $x$ and $y$ which is the minimum number of edges in any path connecting $x$ and $y$. 
If a sequence of expanders is coarsely embeddable into a metric space, then the metric space is not coarsely embeddable into any Hilbert space. 
But a sequence of graphs into which a sequence of expanders is embedded as subgraphs may be coarsely embeddable into a Hilbert space. 
For example, for a sequence of expanders $\{H_n\}_{n=1}^\infty$, let $G_n=(V_n,E_n)$ be $V_n = V_{H_n}\cup \{ x \}$ and $E_n = E_{H_n}\cup \{xy:y\in V_{H_n}\}$, then  $\{G_n\}_{n=1}^\infty $ is coarsely embeddable into a Hilbert space.
However we obtain 

\begin{proposition}\label{Prop:1}
Let $\{G_n=(V_n,E_n)\}_{n=1}^\infty$ be a sequence of graphs with $\sup_{n\in \mathbb{N}}\deg(G_n)<\infty $. 
If $\{G_n\}_{n=1}^\infty$  has a sequence of induced subgraphs $H_n$ of $G_n$ such that 
$\{H_n\}_{n=1}^\infty $ is a sequence of expanders,  
then $\{G_n\}_{n=1}^\infty$ is not coarsely embeddable into any Hilbert space. 
\end{proposition}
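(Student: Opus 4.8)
The plan is to argue by contradiction, using the Poincaré inequality satisfied by an expander together with a ball-counting estimate that exploits the \emph{ambient} bounded degree. Suppose, towards a contradiction, that $\{G_n\}_{n=1}^\infty$ were coarsely embeddable into a Hilbert space $\mathcal{H}$, so that there are non-decreasing $\rho_1,\rho_2$ with $\rho_1(r)\to\infty$ and maps $f_n:V_n\to\mathcal{H}$ satisfying $\rho_1(d_{G_n}(x,y))\le\|f_n(x)-f_n(y)\|\le\rho_2(d_{G_n}(x,y))$. I would apply everything to the restrictions $f_n|_{V_{H_n}}$ and play a lower bound on the total pairwise distortion against an upper bound forced by the expander edges.

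First I would record the Poincaré inequality for the expanders $H_n$. Since $H_n$ is induced in $G_n$ we have $\deg(H_n)\le\deg(G_n)\le D:=\sup_n\deg(G_n)$, and Cheeger's inequality (\ref{eq:h=lambda}) converts the uniform bound $\inf_n h(H_n)>0$ into a uniform spectral gap $\lambda_2(H_n)\ge c>0$. Applying the variational characterization of $\lambda_2$ coordinatewise to $f_n|_{V_{H_n}}$ (valued in the finite-dimensional span of its image) and writing $\overline{f_n}$ for the average over $V_{H_n}$, together with the identity $\sum_{x,y\in V_{H_n}}\|f_n(x)-f_n(y)\|^2=2|V_{H_n}|\sum_{x\in V_{H_n}}\|f_n(x)-\overline{f_n}\|^2$, gives
$$\frac{1}{2|V_{H_n}|}\sum_{x,y\in V_{H_n}}\|f_n(x)-f_n(y)\|^2\le\frac{1}{\lambda_2(H_n)}\sum_{xy\in E_{H_n}}\|f_n(x)-f_n(y)\|^2.$$

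Next I would estimate both sides. Because $H_n$ is induced, every edge $xy\in E_{H_n}$ is an edge of $G_n$, so $d_{G_n}(x,y)=1$ and $\|f_n(x)-f_n(y)\|\le\rho_2(1)$; with $|E_{H_n}|\le\frac{D}{2}|V_{H_n}|$ this bounds the right-hand side by $\frac{D\rho_2(1)^2}{2\lambda_2(H_n)}|V_{H_n}|$. For the lower bound I would use the bounded degree of $G_n$: a ball of radius $R$ in $G_n$ has at most $D^{R+1}$ vertices, so choosing $R_n$ with $D^{R_n+1}\le|V_{H_n}|/2$ forces, for each fixed $x\in V_{H_n}$, at least half of the $y\in V_{H_n}$ to satisfy $d_{G_n}(x,y)>R_n$, whence $\|f_n(x)-f_n(y)\|\ge\rho_1(R_n)$; this gives $\sum_{x,y\in V_{H_n}}\|f_n(x)-f_n(y)\|^2\ge\frac12|V_{H_n}|^2\rho_1(R_n)^2$, and since $|V_{H_n}|\to\infty$ we may take $R_n\to\infty$. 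Combining the two estimates cancels $|V_{H_n}|^2$ and yields $\rho_1(R_n)^2\le 2D\rho_2(1)^2/\lambda_2(H_n)\le 2D\rho_2(1)^2/c$, a constant independent of $n$, contradicting $\rho_1(R_n)\to\infty$.

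The main obstacle is the lower bound step. The expander structure of $H_n$ only controls its own \emph{edges}, which are distance-$1$ pairs in $G_n$, so the Poincaré inequality naturally produces a quantity read off through $E_{H_n}$; to make the left-hand pair-sum large one must know that most pairs of vertices of $H_n$ are \emph{far apart in $G_n$}. This cannot be extracted from $H_n$-distances, since a shortest $G_n$-path may leave $V_{H_n}$ and hence $d_{G_n}\le d_{H_n}$ points the wrong way. The essential input is therefore the uniformly bounded degree of the ambient graphs via exponential ball growth, which is precisely where the hypothesis $\sup_n\deg(G_n)<\infty$ (not merely $\sup_n\deg(H_n)<\infty$) is used; this is also what rules out the counterexample of adding a single dominating vertex described before the statement.
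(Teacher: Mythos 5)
Your proof is correct and follows essentially the same route as the paper's: a Poincar\'e inequality for $H_n$ from the uniform spectral gap, the upper bound $\rho_2(1)$ on edges of the induced subgraph, and a ball-counting argument in the ambient graph $G_n$ showing most pairs of $V_{H_n}$ are $G_n$-far apart. Your version is marginally cleaner in two spots -- you use the exact variance identity rather than the paper's factor-of-$4$ bound, and you correctly base the ball-growth estimate on $\deg(G_n)$ where the paper writes $\deg(H_n)$ -- but the argument is the same.
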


The proof is about the same of the proof of that a sequence of expanders is not coarsely embeddable into any Hilbert space (\cite{MR1978492}).  

\begin{proof}
Assume $\{G_n\}_{n=1}^\infty$ is coarsely embeddable into a Hilbert space $\mathcal{H}$. 
Then there exist two non-decreasing functions $\rho_1$ and $\rho_2$ on $[0,+\infty)$ and maps $f_n:G_n\to \mathcal{H}$ such that 
\begin{enumerate}
\item $\rho_1 (d_{G_n}(x,y))\le \|f_n(x)-f_n(y)\| \le \rho _2(d_{G_n}(x, y)) $ for all $x,y\in G_n$ and $n$; 
\item $\lim_{r\to \infty }\rho_1(r) = + \infty$, 
\end{enumerate}
where $d_{G_n}$ is the path metric on $G_n$.
From the assumption in the theorem, there exists a constant $C>0$ such that 
\begin{eqnarray*}
C\le \lambda_2(H_n) 
\le  \inf_{\varphi :V_{H_n}\to \mathbb{R}}\left\{ \frac{\| d\varphi \|^2}{\|\varphi \|^2}:\varphi  \text{ is perpendicular to constant functions} \right\} .
\end{eqnarray*}
Hence the subgraphs $H_n$ satisfy  
$$ C \sum_{x\in V_{H_n}} |\varphi (x)|^2 \le \sum_{e\in E_{H_n}} |d\varphi (e)|^2 = \sum_{xy\in E_{H_n}} |\varphi (x)-\varphi (y)|^2 $$
for any functions $\varphi :V_{H_n}\to \mathbb{R}$ which are perpendicular to constant functions. 
On the other hand, 
\begin{eqnarray*}
\sum_{x,y\in V_{H_n}}|\varphi (x)-\varphi (y)|^2 
\le \sum_{x,y\in V_{H_n}}(|\varphi (x)|+|\varphi (y)|)^2 
\le 4|V_{H_n}| \sum_{x\in V_{H_n}}|\varphi (x)|^2 . 
\end{eqnarray*}
Hence 
$$
\frac{C}{4|V_{H_n}|^2} \sum_{x,y\in V_{H_n}}|\varphi (x)-\varphi (y)|^2 
\le \frac{1}{|V_{H_n}|} \sum_{xy\in E_{H_n}} |\varphi (x)-\varphi (y)|^2 
\le \frac{\deg(H_n)}{|E_{H_n}|} \sum_{xy\in E_{H_n}} |\varphi (x)-\varphi (y)|^2. 
$$
The Hilbert space $\mathcal{H}$ can be represented as $L^2(\Omega ,\mu)$ by some measure space $(\Omega ,\mu)$.  
Translating the functions $f_n(\cdot )(z)$ parallel  we may assume $\sum_{x\in V} f_n(x)(z)=0$ for each $z\in \Omega$, which means $f_n(\cdot )(z)$ is perpendicular to constant functions on $V$. 
Thus we have 
$$\frac{C}{4|V_{H_n}|^2} \sum_{x,y\in V_{H_n}}\|f_n(x)-f_n(y)\|^2 
\le \frac{\deg(H_n)}{|E_{H_n}|} \sum_{xy\in E_{H_n}} \|f_n(x)-f_n(y)\|^2 $$
where $\|\ \ \|$ is the norm on $\mathcal{H}$.
This implies 
$$\frac{1}{|V_{H_n}|^2} \sum_{x,y\in V_{H_n}}\rho_1 (d_{G_n}(x,y))^2 
\le \frac{4\deg(H_n)}{C|E_{H_n}|} \sum_{xy\in E_{H_n}} \rho _2(d_{G_n}(x, y))^2 
= \frac{4\deg(H_n)\rho _2(1)^2}{C}<\infty. $$
We will show the left hand side diverges to $\infty$ as $n\to \infty$.
Let $B_{x}(r):= \{ y\in V_n : d_{G_n}(y,x)\le r \}$ 
for $x\in V_n$ and $r\ge 0$.  
For each $x\in V_{H_n}$,  
$$ 
|\{ y\in V_{H_n}: d_{G_n}(y,x) > r\}| 
= \left| V_{H_n} - B_{x}(r) \right| 
\ge |V_{H_n}| - \sum_{i=0}^r \deg(H_n)^i  
= |V_{H_n}| - \frac{\deg(H_n)^{r+1}-1}{\deg(H_n)-1}. 
$$ 
Let $r_n:= (\log_{\deg(H_n)}|V_{H_n}|)/2-1$, 
then $r_n\to \infty $ as $n\to \infty$. 
We get 
\begin{eqnarray*}
|\{ (x,y) \in V_{H_n} ^2: d_{G_n}(x,y) > r_n\}| 
\ge |V_{H_n}|\left( |V_{H_n}| - \frac{|V_{H_n}|^{1/2}-1}{\deg(H_n)-1} \right) 
\ge |V_{H_n}|^2/2   
\end{eqnarray*}
for large $n$. 
Therefore 
\begin{eqnarray*}
\frac{1}{|V_{H_n}|^2} \sum_{x,y\in V_{H_n}}\rho_1 (d_{G_n}(x,y))^2 
\ge \frac{\rho_1 (r_n)^2}{2} 
\to \infty  
\end{eqnarray*}
as $n \to \infty$. 
This is a contradiction. 
\end{proof}

Using this proposition and Corollary \ref{cor:1}, we obtain  

\begin{corollary}\label{cor:CE}
A sequence of multi-way expanders is not coarsely embeddable into any Hilbert space. 
\end{corollary}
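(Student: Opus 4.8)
The plan is to combine Corollary \ref{cor:1} with Proposition \ref{Prop:1}; the substantive content is already packaged in those two results, so the work here is just to assemble them and to handle the passage to a subsequence. First I would invoke Corollary \ref{cor:1}: given a sequence of multi-way expanders $\{G_n\}_{n=1}^\infty$, it produces a subsequence $\{G_m\}_{m=1}^\infty$ together with induced subgraphs $H_m$ of $G_m$ for all $m$ such that $\{H_m\}_{m=1}^\infty$ is a sequence of expanders. Since passing to a subsequence cannot increase the supremum of the degrees, we still have $\sup_m \deg(G_m)<\infty$, so the subsequence $\{G_m\}_{m=1}^\infty$ together with the subgraphs $\{H_m\}_{m=1}^\infty$ satisfies exactly the hypotheses of Proposition \ref{Prop:1}. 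Applying that proposition yields that $\{G_m\}_{m=1}^\infty$ is not coarsely embeddable into any Hilbert space.

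It then remains to promote this statement about the subsequence to the full sequence. For this I would argue by contradiction: suppose $\{G_n\}_{n=1}^\infty$ were coarsely embeddable into some Hilbert space $\mathcal{H}$, witnessed by maps $f_n:G_n\to\mathcal{H}$ and non-decreasing control functions $\rho_1,\rho_2$ with $\rho_1(r)\to+\infty$. Restricting attention to the indices $m$ appearing in the subsequence, the maps $f_m$ together with the very same $\rho_1$ and $\rho_2$ would witness coarse embeddability of $\{G_m\}_{m=1}^\infty$ into $\mathcal{H}$, contradicting the conclusion of the previous paragraph. Hence $\{G_n\}_{n=1}^\infty$ is not coarsely embeddable into any Hilbert space.

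I do not expect any real obstacle in this argument. The analytic heart of the matter, namely the averaging/Poincar\'e-type estimate that forces divergence of $\frac{1}{|V_{H_n}|^2}\sum_{x,y}\rho_1(d_{G_n}(x,y))^2$, lives entirely inside Proposition \ref{Prop:1}, and the combinatorial extraction of the expander subgraphs from the multi-way expansion hypothesis (iii)' lives inside Corollary \ref{cor:1} (via Theorem \ref{thm:l-sub}). The only point deserving an explicit word is the elementary observation that coarse embeddability is inherited by subsequences, which is immediate from the definition because the witnessing data $(f_n,\rho_1,\rho_2)$ restrict verbatim to any sub-collection of indices.
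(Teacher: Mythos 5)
Your proof is correct and follows exactly the route the paper intends: the paper derives Corollary \ref{cor:CE} precisely by combining Corollary \ref{cor:1} with Proposition \ref{Prop:1}, and your remark that coarse embeddability restricts to subsequences is the only glue needed.
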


\begin{remark}
Whatever a sequence of connected finite graphs is not coarsely embeddable into any Hilbert space, it may not be a sequence of multi-way expanders.
For example, let $\{H_n\}_{n=1}^\infty $ be a sequence of expanders. 
Pick up points $x_n$ and $y_n$ in each $H_n$ arbitrarily. 
Let $\{G_n=(V_n,E_n)\}_{n=1}^\infty$ be a sequence of graphs such that 
$V_n:= \cup_{m=1}^nV_{H_m}$ and $E_n:= \cup_{m=1}^nE_{H_m}\cup \{x_my_{m-1}:m=2,3,\dots n \}$.	
The sequence $\{G_n\}_{n=1}^\infty$ satisfies the assumption of Proposition \ref{Prop:1}, hence it  
is not coarsely embeddable into any Hilbert space. 
On the other hand, for even $n\in \mathbb{N}$ and for $m=1,2,\dots n$, define $f_m:V_{2n}\to \mathbb{R}$ by $f_m(x)=1/\sqrt{|V_{H_{2m}}|}$ if $x\in H_{2m}$ and $f_m(x)=0$ otherwise. 
Then $f_m$ are perpendicular to each other, $\|f_m \|=1$, $\|df_m\|^2 \le 2/|V_{H_{2m}}| $, and $\{f_m\}_{m=1}^{n/2}$ spans $n/2$-dimensional linear space. 
Since 
\begin{eqnarray*}
\lambda_{n/2}(G_{2n}) 
&=& \inf_{L_{n/2}} \sup_f \left\{ \frac{\|df\|^2}{\|f\|^2} :f\in  L_{n/2}, f\not\equiv 0\right\} 
\\ &\le & \sup \left\{ \| df_m\|^2 : m=1,2, \dots ,n/2 \right\} 
\\ &\le & \sup \left\{ \frac{2}{|V_{H_{2m}}|} : m=1,2, \dots ,n/2  \right\} 
\\ &\to & 0 \ \ {\text as }\ \ n\to \infty.
\end{eqnarray*}
Hence $\lambda_k(G_{2n})$ converges to 0 as $n\to \infty$ for any $k\in \mathbb{N}$. 
\end{remark}

Beside Proposition \ref{Prop:1}, we have 

\begin{proposition}\label{Prop:2}
Let $\{G_n\}_{n=1}^\infty$ be a sequence of graphs which has a sequence of induced subgraphs $H_n$ of $G_n$ such that $\{H_n\}_{n=1}^\infty $ is a sequence of expanders. 
If $|\partial V_{H_n}|/|V_{H_n}|\to 0$ as $n\to \infty$, 
then $\{G_n\}_{n=1}^\infty$ is not coarsely embeddable into any Hilbert space. 
\end{proposition}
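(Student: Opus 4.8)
The plan is to follow the proof of Proposition \ref{Prop:1} verbatim up to the point where one must show that the average $\frac{1}{|V_{H_n}|^2}\sum_{x,y\in V_{H_n}}\rho_1(d_{G_n}(x,y))^2$ diverges, and then to replace the ball-growth estimate (which used $\sup_n\deg(G_n)<\infty$) by an argument based on the hypothesis $|\partial V_{H_n}|/|V_{H_n}|\to 0$. First I would assume $\{G_n\}$ embeds coarsely into a Hilbert space $\mathcal{H}=L^2(\Omega,\mu)$ with maps $f_n$ and control functions $\rho_1,\rho_2$. The spectral step of Proposition \ref{Prop:1} uses only that each $H_n$ is an expander of bounded degree and that its edges have $G_n$-length $1$; crucially it does \emph{not} use any degree bound on $G_n$. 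Hence, exactly as there, writing $C:=\inf_n\lambda_2(H_n)>0$ one obtains
\[
\frac{1}{|V_{H_n}|^2}\sum_{x,y\in V_{H_n}}\rho_1(d_{G_n}(x,y))^2 \le \frac{4\deg(H_n)\rho_2(1)^2}{C}<\infty .
\]
It therefore remains to prove that the left-hand side tends to $\infty$, which furnishes the contradiction.

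The decisive new ingredient is a lower bound for $d_{G_n}$ in terms of $d_{H_n}$. Write $N_n:=|V_{H_n}|$, $B_n:=|\partial V_{H_n}|$ and $D:=\sup_n\deg(H_n)<\infty$. The key observation I would record is: if $x,y\in V_{H_n}$ and a geodesic of $G_n$ from $x$ to $y$ leaves $V_{H_n}$, then — since $H_n$ is an induced subgraph — that geodesic passes through a vertex of $V_{H_n}$ incident to $\partial V_{H_n}$ both when it first exits and when it last re-enters $V_{H_n}$; denoting these boundary vertices $a$ and $b$, one gets $d_{G_n}(x,y)\ge d_{H_n}(x,a)+d_{H_n}(b,y)$. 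Consequently, calling a vertex \emph{far} if its $H_n$-distance to every endpoint of $\partial V_{H_n}$ exceeds $\rho$, every pair of far vertices $x,y$ satisfies $d_{G_n}(x,y)\ge\min\{d_{H_n}(x,y),2\rho\}$.

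Next I would choose a threshold $\rho_n\to\infty$ growing slowly enough that the exceptional pairs are negligible. Since $B_n/N_n\to 0$ one may take $\rho_n:=\lfloor\frac14\log_D(N_n/\max\{B_n,1\})\rfloor$, so that $D^{2\rho_n}\le (N_n/\max\{B_n,1\})^{1/2}$ and $\rho_n\to\infty$ (the term $\max\{B_n,1\}$ covers the degenerate case $B_n=0$, where $d_{G_n}=d_{H_n}$ on $V_{H_n}$ anyway). Using that balls of radius $r$ in $H_n$ contain at most $D^{r+1}$ vertices, one checks (i) the number of non-far vertices is at most $B_n D^{\rho_n+1}=o(N_n)$, and (ii) for each $x$ the number of $y$ with $d_{H_n}(x,y)<2\rho_n$ is at most $D^{2\rho_n+1}=o(N_n)$. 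Discarding these two exceptional families leaves at least $\tfrac12 N_n^2$ pairs $(x,y)$ that are far and satisfy $d_{H_n}(x,y)\ge 2\rho_n$, and for each such pair the inequality above gives $d_{G_n}(x,y)\ge 2\rho_n$. Since $\rho_1$ is non-decreasing,
\[
\frac{1}{N_n^2}\sum_{x,y\in V_{H_n}}\rho_1(d_{G_n}(x,y))^2 \ge \tfrac12\,\rho_1(2\rho_n)^2 \longrightarrow \infty ,
\]
contradicting the previous bound and completing the proof.

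The main obstacle — and the reason a new argument is required at all — is precisely securing the bound $d_{G_n}(x,y)\ge\min\{d_{H_n}(x,y),2\rho\}$: without a degree bound on $G_n$ the ambient graph may contain arbitrarily high-degree vertices creating shortcuts between vertices of $V_{H_n}$, so one cannot control ball growth in $G_n$ directly as in Proposition \ref{Prop:1}. The hypothesis $|\partial V_{H_n}|/|V_{H_n}|\to 0$ is exactly what tames this: every shortcut is forced to enter and leave $V_{H_n}$ through the few boundary vertices, so only a vanishing fraction of vertices (those lying near the boundary inside $H_n$) can profit from shortcuts, while the overwhelming majority of pairs retain the logarithmic-scale separation inherited from the expander $H_n$ itself.
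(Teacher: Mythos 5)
Your proposal is correct and follows essentially the same route as the paper: both arguments reduce to showing that at least $|V_{H_n}|^2/2$ pairs in $V_{H_n}$ are $G_n$-far apart, and both achieve this by observing that a $G_n$-geodesic between vertices of the induced subgraph $H_n$ which leaves $V_{H_n}$ must enter and exit through the set $F_n$ of boundary vertices, then discarding the $o(|V_{H_n}|)$ vertices within a logarithmic $H_n$-radius of $F_n$ and the $o(|V_{H_n}|)$ vertices $H_n$-close to a given vertex, with the radius $r_n\to\infty$ chosen from the hypothesis $(|\partial V_{H_n}|+1)/|V_{H_n}|\to 0$. The paper packages this via the sets $W_n(r)$ and $W_n(r,z)$ and the identity between $G_n$-far and $H_n$-far sets, while you phrase it through the inequality $d_{G_n}(x,y)\ge d_{H_n}(x,a)+d_{H_n}(b,y)$, but the content is the same.
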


\begin{proof}
As the proof of Proposition \ref{Prop:1}, it is only necessary to show   
\begin{eqnarray*}
|\{ (x,y) \in V_{H_n} ^2: d_{G_n}(x,y) > r_n\}| 
\ge |V_{H_n}|^2/2   
\end{eqnarray*}
for large $n$, for some $\{r_n\}_{n=1}^\infty$ with $r_n\to \infty $ as $n\to \infty$. 

Let $F_n:=\{ x\in V_{H_n} : xy\in \partial V_{H_n} $ for some $y\}$ and 
$ W_n(r):=\cap_{y\in F_n}\{ x\in V_{H_n}: d_{G_n}(y,x)> r\}$. 
Then 
$$W_n(r)= \cap_{y\in F_n}\{ x\in V_{H_n}: d_{H_n}(y,x)> r\} .$$
Let  
$ W_n(r,z):=\{ w\in W_n(r): d_{G_n}(z,w)> r\} $ for $z \in W_n(r)$.  
Then 
$$W_n(r,z) = \{ w\in W_n(r): d_{H_n}(z,w)> r\},$$
because $d_{H_n}(z,w) \ge d_{G_n}(z,w)$ and if $d_{H_n}(z,w) > d_{G_n}(z,w)$ then the path attaining the distance between $z$ and $w$ with respect to $d_{G_n}$ should be through a point in $F_n$.  
We have $|\{x\in V_{H_n}: d_{H_n}(x,y)\le r\}|\le \deg(H_n)^r$ for $y\in V_{H_n}$.
Since $|F_n| \le |\partial V_{H_n}| $, we have 
\begin{eqnarray*}
|W_n(r)| &\ge & |W_n(r,z)| 
\\ &=& |V_{H_n} - \cup_{y\in F_n}\{ x\in V_{H_n}: d_{H_n}(x,y)\le r\} - \{ x\in V_{H_n}:  d_{H_n}(z,x)\le r\} |
\\ &\ge& |V_{H_n}| - (|F_n|+1)\deg(H_n)^r 
\\ &\ge& |V_{H_n}|\left(1 - \frac{|\partial V_{H_n}|+1}{|V_{H_n}|}\deg(H_n)^r\right) . 
\end{eqnarray*}
We may assume $|\partial V_{H_n}|>0$ for all $n$. 
Since $ (|\partial V_{H_n}|+1)/|V_{H_n}| \to 0$ as $n \to \infty$, 
the sequence $$r_n:= -\frac{1}{2}\log_{\deg(H_n)} \frac{|\partial V_{H_n}|+1}{|V_{H_n}|}$$ diverges to $\infty$ as $n \to \infty$. 
Hence 
\begin{eqnarray*}
|\{ (x,y) \in V_{H_n} ^2: d_{G_n}(x,y) > r_n\}| 
&\ge&|\{(z,w)\in V_{H_n}^2: z\in W_n(r_n), w\in W_n(r_n,z)\}|
\\ &\ge & |V_{H_n}|^2\left(1 - \frac{|\partial V_{H_n}|+1}{|V_{H_n}|}\deg(H_n)^{r_n}\right) ^2
\\ &\ge & \frac{|V_{H_n}|^2}{2} 
\end{eqnarray*}
for large $n$. 
\end{proof}

\begin{acknowledgements}
The author thanks K. Funano for telling me his new result in \cite{MR3061776}, which help to think of results in this paper.
The author was supported by the Grant-in-Aid for the GCOE Program 'Weaving Science Web beyond Particle-Matter Hierarchy', Tohoku University, and GCOE 'Fostering top leaders in mathematics', Kyoto University.
\end{acknowledgements}

\begin{bibdiv}
\begin{biblist}
\bib{MR782626}{article}{
   author={Alon, N.},
   author={Milman, V. D.},
   title={$\lambda_1,$ isoperimetric inequalities for graphs, and
   superconcentrators},
   journal={J. Combin. Theory Ser. B},
   volume={38},
   date={1985},
   number={1},
   pages={73--88},
   issn={0095-8956},
   review={\MR{782626 (87b:05092)}},
   doi={10.1016/0095-8956(85)90092-9},
}
\bib{MR1989434}{book}{
   author={Davidoff, Giuliana},
   author={Sarnak, Peter},
   author={Valette, Alain},
   title={Elementary number theory, group theory, and Ramanujan graphs},
   series={London Mathematical Society Student Texts},
   volume={55},
   publisher={Cambridge University Press},
   place={Cambridge},
   date={2003},
   pages={x+144},
   isbn={0-521-82426-5},
   isbn={0-521-53143-8},
   review={\MR{1989434 (2004f:11001)}},
   doi={10.1017/CBO9780511615825},
}
\bib{MR743744}{article}{
   author={Dodziuk, Jozef},
   title={Difference equations, isoperimetric inequality and transience of
   certain random walks},
   journal={Trans. Amer. Math. Soc.},
   volume={284},
   date={1984},
   number={2},
   pages={787--794},
   issn={0002-9947},
   review={\MR{743744 (85m:58185)}},
   doi={10.2307/1999107},
}
\bib{MR3061776}{article}{
    AUTHOR = {Funano, Kei and Shioya, Takashi},
     TITLE = {Concentration, {R}icci {C}urvature, and {E}igenvalues of
              {L}aplacian},
   JOURNAL = {Geom. Funct. Anal.},
  FJOURNAL = {Geometric and Functional Analysis},
    VOLUME = {23},
      YEAR = {2013},
    NUMBER = {3},
     PAGES = {888--936},
      ISSN = {1016-443X},
     CODEN = {GFANFB},
   MRCLASS = {Preliminary Data},
  MRNUMBER = {3061776},
       DOI = {10.1007/s00039-013-0215-x},
       URL = {http://dx.doi.org/10.1007/s00039-013-0215-x},
}
\bib{MR1253544}{article}{
   author={Gromov, M.},
   title={Asymptotic invariants of infinite groups},
   conference={
      title={Geometric group theory, Vol.\ 2},
      address={Sussex},
      date={1991},
   },
   book={
      series={London Math. Soc. Lecture Note Ser.},
      volume={182},
      publisher={Cambridge Univ. Press},
      place={Cambridge},
   },
   date={1993},
   pages={1--295},
   review={\MR{1253544 (95m:20041)}},
}

\bib{MR1978492}{article}{
    AUTHOR = {Gromov, M.},
     TITLE = {Random walk in random groups},
   JOURNAL = {Geom. Funct. Anal.},
  FJOURNAL = {Geometric and Functional Analysis},
    VOLUME = {13},
      YEAR = {2003},
    NUMBER = {1},
     PAGES = {73--146},
      ISSN = {1016-443X},
     CODEN = {GFANFB},
   MRCLASS = {20F65 (20F67 20P05 60G50)},
  MRNUMBER = {1978492 (2004j:20088a)},
MRREVIEWER = {Thomas Delzant},
       DOI = {10.1007/s000390300002},
       URL = {http://dx.doi.org/10.1007/s000390300002},
}
		
\bib{MR2247919}{article}{
   author={Hoory, Shlomo},
   author={Linial, Nathan},
   author={Wigderson, Avi},
   title={Expander graphs and their applications},
   journal={Bull. Amer. Math. Soc. (N.S.)},
   volume={43},
   date={2006},
   number={4},
   pages={439--561 (electronic)},
   issn={0273-0979},
   review={\MR{2247919 (2007h:68055)}},
   doi={10.1090/S0273-0979-06-01126-8},
}

\bib{MR2961569}{collection}{
    AUTHOR = {Lee, James R. and Oveis Gharan, Shayan and Trevisan, Luca},
     TITLE = {Multi-way spectral partitioning and higher-order {C}heeger
              inequalities},
 BOOKTITLE = {S{TOC}'12---{P}roceedings of the 2012 {ACM} {S}ymposium on
              {T}heory of {C}omputing},
     PAGES = {1117--1130},
 PUBLISHER = {ACM},
   ADDRESS = {New York},
      YEAR = {2012},
   MRCLASS = {05C40 (05C50 05C85)},
  MRNUMBER = {2961569},
       DOI = {10.1145/2213977.2214078},
       URL = {http://dx.doi.org/10.1145/2213977.2214078},
}


\bib{MR1728880}{article}{
   author={Yu, Guoliang},
   title={The coarse Baum-Connes conjecture for spaces which admit a uniform
   embedding into Hilbert space},
   journal={Invent. Math.},
   volume={139},
   date={2000},
   number={1},
   pages={201--240},
   issn={0020-9910},
   review={\MR{1728880 (2000j:19005)}},
   doi={10.1007/s002229900032},
}
\end{biblist}
\end{bibdiv}

\vspace{5mm}
\noindent 
M. Tanaka,\\ 
Advanced Institute for Materials Research, Tohoku University, Sendai, 980-8577 Japan\\ 
E-mail: mamoru.tanaka@wpi-aimr.tohoku.ac.jp
\end{document}